\documentclass{psp}
\usepackage{amsmath,amsfonts,amssymb,amsbsy,mathrsfs,bbm,url}

\newif\ifpost   
\postfalse

\ifpost
\else
\fi

\usepackage[T1]{fontenc}
\usepackage{etoolbox}
\makeatletter
\patchcmd{\@thm}{\thm@headfont{\scshape}}{\thm@headfont{\scshape\bfseries}}{}{}
\patchcmd{\@thm}{\thm@notefont{\fontseries\mddefault\upshape}}{}{}{}
\makeatother

\newtheorem{thm}{Theorem}
\newtheorem{lem}{Lemma}[section]

\newtheorem{cor}[thm]{Corollary}

\numberwithin{equation}{section}

%
%


\newcommand{\NN}{{\mathbb N}}
\newcommand{\er}{\mathrm{e}}




\newcommand{\PR}{\mathbb P}
\newcommand{\E}{{\mathbb E}\,}
\newcommand\Pois{\operatorname{Pois}}  
\newcommand{\deq}{\overset{d}=}  

\newcommand{\bal}{\[\begin{aligned}}
\newcommand{\eal}{\end{aligned}\]}

\newcommand{\be}{\begin{equation}}
\newcommand{\ee}{\end{equation}}

\newcommand{\ssum}[1]{\sum_{\substack{#1}}}  
\newcommand{\sprod}[1]{\prod_{\substack{#1}}}  


\renewcommand{\a}{\ensuremath{\alpha}}
\renewcommand{\b}{\ensuremath{\beta}}

\newcommand{\eps}{\ensuremath{\varepsilon}}


\renewcommand{\le}{\leqslant}

\renewcommand{\ge}{\geqslant}

\newcommand{\fl}[1]{{\ensuremath{\left\lfloor {#1} \right\rfloor}}}

\renewcommand{\(}{\left(}
\renewcommand{\)}{\right)}

\newcommand{\pfrac}[2]{\left(\frac{#1}{#2}\right)}  

\newcommand{\XX}{\ensuremath{\mathbf{X}}}
\newcommand{\VV}{\ensuremath{\mathbf{V}}}

\newcommand{\one}{\ensuremath{\mathbbm{1}}} 



\begin{document}

\title{Joint Poisson distribution of prime factors in sets}
\author[Kevin Ford]{Kevin Ford \\
Department of Mathematics, 1409 West Green Street, \addressbreak University
of Illinois at Urbana-Champaign, Urbana, IL 61801, USA \addressbreak
ford@math.uiuc.edu}
\maketitle

\begin{abstract}
Given disjoint subsets $T_1,\ldots,T_m$ of ``not too large'' primes up to
$x$, we establish that for a random integer $n$ drawn from $[1,x]$,
the $m$-dimensional vector
enumerating the number of prime factors of $n$ from $T_1,\ldots,T_m$
converges to a vector of $m$ independent Poisson
random variables.  We give a specific rate of convergence
using the Kubilius model of prime factors.
We also show a universal upper bound of Poisson type when
$T_1,\ldots,T_m$ are unrestricted, and apply this to
the distribution of the number of prime factors from
a set $T$ conditional on $n$ having $k$ total prime factors.
\end{abstract}
\thanks{The author thanks G\'erald Tenenbaum and the anonymous referee for helpful comments.}
\thanks{The author was supported by NSF grant DMS-1802139.}




\section{Introduction}
A central theme in probabilistic number theory concerns the distribution
of additive arithmetic functions, in particular the functions
$\omega(n)$ and $\Omega(n)$, which count the number of
distinct prime factors of $n$ and the number of prime power factors of $n$,
respectively.
Taking a uniformly random integer $n\in [1,x]$ with $x$ large,
the functions $\omega(n)$ and $\Omega(n)$ behave like
Poisson random variables with parameter $\log\log x$.
This was established by Sathe \cite{sathe} and Selberg \cite{selberg} in 1954,
while hints of this were already present in the inequalities of
Landau \cite{Landau}, 
 Hardy and Ramanujan \cite{HR17},
Erd\H os \cite{erdos37}, and Erd\H os and Kac
\cite{erdoskac}. 
We refer the reader to Elliott's notes \cite[pp. 23--26]{elliott} for an extensive discussion of
the history of these results.

In this paper we address the distribution of the 
number of prime factors of $n$ lying in an arbitrary set $T$.
Denote by $\PR_x$ the probabiliy 
with respect to a  uniformly random integer $n$ drawn from $[1,x]$.
Each such $n$ has a unique prime factorization
\[
n= \prod_{p\le x} p^{v_p},
\]
where the exponents $v_p$ are now random variables.
For any finite set $T$ of primes, let
\[
\omega(n,T) = \# \{p|n:p\in T\}=\#\{p\in T: v_p>0\},
\qquad \Omega(n,T)=\sum_{p\in T} v_p.
\]
For a prime $p$, the event $\{p|n\}$ occurs with probability
close to $1/p$, and thus heuristically
\be\label{Poisson-approx}
\PR_x(\omega(n,T)=k) \approx \ssum{p_1,\ldots,p_k\in T \\ p_1<\cdots<p_k} \frac{1}{p_1\cdots p_k} 
\sprod{p\in T \\ p\not\in \{p_1,\ldots,p_k\}} \(1-\frac{1}{p}\) \approx \er^{-H(T)}\frac{H(T)^k}{k!}
\ee
where 
\[
H(T) = \sum_{p\in T} \frac{1}{p}.
\]
That is, we expect that
$\omega(n,T)$ will be close to Poisson with parameter $H(T)$.
A more complicated combinatorial heuristic also suggests
that $\Omega(n,T)$ is close to Poisson with parameter $H(T)$.
This was made rigorous by 
Hal{\'a}sz \cite{Halasz} in 1971, who showed\footnote{As usual, the notations
$f=O(g)$, $f\ll g$ and $g\gg f$ means that there is a constant $C$
so that $|f|\le g$ throughout the domain of $f$.  The constant $C$ is
indepenedent of any variable or parameter unless that dependence is
specified by a subscript, e.g. $f=O_A(g)$ means that $C$ depends on $A$.}
\be\label{halasz-local}
\PR_x(\Omega(n,T)=k) = \frac{H(T)^k}{k!} \er^{-H(T)}
\bigg( 1 + O_\delta\pfrac{|k-H(T)|}{H(T)} +O_\delta\bigg(\frac{1}{\sqrt{H(T)}}\bigg) \bigg),
\ee
uniformly in the range $\delta H(T) \le k\le (2-\delta) H(T)$, where
$\delta>0$ is fixed.
Small modifications to the proof yield an identical
estimate for $\PR_x(\omega(n,T)=k)$; see
\cite[p. 301]{elliott} for a sketch of the argument.
Inequality \eqref{halasz-local} implies the order of magnitude estimate
\[
\frac{H(T)^k}{k!} \er^{-H(T)} \ll
\PR_x(\Omega(n,T)=k) \ll \frac{H(T)^k}{k!} \er^{-H(T)}
\]
when $(1-\eps)H(T) \le k\le (2-\delta) H(T)$ for sufficiently small $\eps>0$.  The range of $k$
in this last bound was extended to $\delta H(T) \le k\le  (2-\delta)H(T)$
by S\'ark\H ozy \cite{Sarkozy} in 1977.

Inequality \eqref{halasz-local} implies that $\Omega(n,T)$
converges to the Poisson distribution with parameter $H(T)$ if
$T$ is a function of $x$ such that $H(T)\to \infty$ 
as $x\to\infty$.  This is a natural condition, as the following examples show.
If $T$ consists only of small primes, say those less than a bounded quantity $t$,
 then $\omega(n,T)$ takes only finitely many values
and thus the distribution cannot converge to Poisson as $x\to\infty$. 
Although $\Omega(n,T)$ is unbounded, the distribution is very far from Poisson,
e.g. $\PR_x(\Omega(n,\{2\})=k) \sim 1/2^{k+1}$ for each $k$. 
Likewise, if $c>1$ is fixed and $T$ is the
set of primes in $(x^{1/c},x]$, 
$\omega(n,T)$ and $\Omega(n,T)$ are each bounded by $c$.
Moreover, the distribution of the largest prime factors of an integer
is governed by the very different Poisson-Dirichlet distribution; see \cite{ten00}
for details.
In each of these examples, $H(T)$ is bounded.
The condition $H(T)\to \infty$
ensures that neither small primes nor large primes dominate $T$
with respect to the harmonic measure.

An asymptotic for the joint local limit laws $\PR(\omega(n;T_1)=k_1,\omega(n;T_2)=k_2)$
was proved by Delange \cite[Section 6.5.3]{Delange} in 1971, in the special case when $T_1$ and $T_2$
are infinite sets with $H(T_j \cap [1,x])=\lambda_j \log\log x + O(1)$
and $\lambda_1,\lambda_2$ constants.
Hal{\'a}sz' result \eqref{halasz-local} was extended  by Tenenbaum \cite{ten17} in 2017 to the joint distribution of $\omega(n;T_j)$
uniformly over any disjoint sets $T_1,\ldots,T_m$ of the primes $\le x$.  If $P= \PR_x(\omega(n,T_i)=k_i, 1\le i\le m)$, then
\be\label{tenenbaum}
\begin{split}
P &= 
\bigg(1+O\bigg( \sum_{j=1}^m \frac{1}{\sqrt{H(T_j)}} \bigg) \bigg)\bigg( \prod_{j=1}^m\frac{H(T_j)^{k_j}}{k_j!} \er^{-k_j} \bigg)
\frac{1}{x}\sum_{n\le x} \prod_{j=1}^m (k_j/H(T_j))^{\omega(n;T_j)}
\\
&=\prod_{j=1}^m\frac{H(T_j)^{k_j}}{k_j!} \er^{-H(T_j)}
\exp\bigg( O\bigg( \sum_{j=1}^m \frac{|k_j-H(T_j)|}{H(T_j)} +\frac{1}{\sqrt{H(T_j)}} \bigg) \bigg),
\end{split}\ee
uniformly in the range $c_1 \le k_j/H(T_j) \le c_2$
$(1\le j\le m)$,
for any fixed $c_1,c_2$ satisfying $0<c_1<c_2$;
see \cite{ten17}, equation (2.23) and the following paragraph.
The methods in \cite{ten17} establish the same
bound for $\PR_x(\Omega(n,T_i)=m_i, 1\le i\le k)$,
but with the restriction
$c_1 \le \frac{k_j}{H(T_j)} \le 2-c_1$, $1\le j\le m$, again with fixed $c_1>0$.  An asymptotic for the sum on $n$ in \eqref{tenenbaum} is not known
in general.
A slight extension of Tenenbaum's asymptotic \eqref{tenenbaum} was given by
Mangerel \cite[Theorem 1.5.3]{Mangerel}, who showed a corresponding asymptotic
in the case where some of the quantities $k_j$ are smaller (specifically, $H(T_j)^{2/3+\eps}<k_j\le H(T_j)$).

In the literature on the subject, $\omega(n,T)$ and 
$\Omega(n,T)$ have always been compared to a 
Poisson variable with parameter $H(T)$.  
As we shall see, the functions $\Omega(n,T)$
are better approximated by a Poisson variable with
parameter
\[
H'(T) = \sum_{p\in T} \frac{1}{p-1},
\]
at least when $T$ does not contain any large primes.
In order to state our results, we introduce a further harmonic sum
\[
H''(T) = \sum_{p\in T} \frac{1}{p^2}.
\]
We note for future reference that
\[
H(T) \le H'(T) \le H(T) + 2 H''(T).
\]
We also use the notion of the total
variation distance $d_{TV}(X,Y)$ between two random variables
 living on the same discrete space $\Omega$:
 \[
 d_{TV}(X,Y) := \sup_{A\subset \Omega} \big| \PR(X\in A)-\PR(Y\in A) \big|.
 \]
We denote by $\Pois(\lambda)$ a Poisson random variable
with parameter $\lambda$, and write $Z\deq \Pois(\lambda)$
for the statement that $Z$ is a Poisson random variable
with parameter $\lambda$.

\begin{thm}\label{thm-main}
Let $2\le y\le x$ and suppose that
 $T_1,\ldots,T_m$ are disjoint nonempty sets of primes in $[2,y]$.
 For each $1\le i\le m$, suppose that either
 $f_i=\omega(n,T_i)$ and $Z_i\deq \Pois(H(T_i))$ or that
 $f_i=\Omega(n,T_i)$ and $Z_i\deq \Pois(H'(T_i))$.  
Assume that $Z_1,\ldots,Z_m$
 are independent.
Then
\[
d_{TV} \Big( (f_1,\ldots,f_m), (Z_1,\ldots,Z_m) \Big) \ll  \sum_{j=1}^m \frac{H''(T_j)}{1+H(T_j)} + u^{-u}, \quad u=\frac{\log x}{\log y}.
\]
\end{thm}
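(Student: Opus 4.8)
The plan is to run the argument inside the Kubilius probabilistic model of prime factors and then to reduce to a one-dimensional Poisson approximation. Introduce \emph{independent} random variables $(\widetilde v_p)_{p\le y}$ with $\PR(\widetilde v_p=j)=p^{-j}(1-1/p)$ for $j\ge0$; the Kubilius model gives
\[
d_{TV}\big((v_p)_{p\le y},\,(\widetilde v_p)_{p\le y}\big)\ll u^{-u},\qquad u=\frac{\log x}{\log y}.
\]
Each $f_i$, and each of its model analogues $\widetilde\omega(n,T_i)=\sum_{p\in T_i}\one\{\widetilde v_p\ge1\}$ and $\widetilde\Omega(n,T_i)=\sum_{p\in T_i}\widetilde v_p$, is the image of $(v_p)_{p\le y}$, resp.\ $(\widetilde v_p)_{p\le y}$, under one and the same measurable map; since $d_{TV}$ does not increase under a common pushforward, replacing $(f_1,\ldots,f_m)$ by its model analogue $(\widetilde f_1,\ldots,\widetilde f_m)$ costs only $O(u^{-u})$. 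In the model the blocks $(\widetilde v_p)_{p\in T_1},\ldots,(\widetilde v_p)_{p\in T_m}$ are independent because the $T_j$ are disjoint, so $\widetilde f_1,\ldots,\widetilde f_m$ are independent; as $Z_1,\ldots,Z_m$ are independent too, subadditivity of total variation distance over product measures gives
\[
d_{TV}\big((\widetilde f_1,\ldots,\widetilde f_m),(Z_1,\ldots,Z_m)\big)\le\sum_{j=1}^m d_{TV}(\widetilde f_j,Z_j),
\]
and it suffices to bound a single term.

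For the one-dimensional step I would invoke a Poisson-approximation inequality of Chen--Stein type: if $X_1,\ldots,X_N$ are independent $\ZZ_{\ge0}$-valued random variables, $W=\sum_i X_i$ and $\lambda=\E W$, then
\[
d_{TV}\big(W,\Pois(\lambda)\big)\ll\frac{1}{1+\lambda}\sum_{i=1}^N\Big((\E X_i)^2+\E\big[X_i(X_i-1)\big]\Big).
\]
This is essentially classical: one solves the Stein equation $\lambda g(k+1)-kg(k)=\one\{k\in A\}-\PR(\Pois(\lambda)\in A)$ with $g(0)=0$, uses the sharp bound $\sup_k|g(k+1)-g(k)|\le(1-\er^{-\lambda})/\lambda\asymp(1+\lambda)^{-1}$, and controls each term $\E[X_i g(W)]$ by size-biasing the variable $X_i$ (writing $W=X_i+W^{(i)}$ with $W^{(i)}$ independent of $X_i$); the two second-moment terms record the cost of replacing the size-biased copy of $X_i$ by $X_i+1$. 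The inequality is also contained in the standard references on Poisson approximation.

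It remains to apply this to each block. When $f_j=\omega(n,T_j)$, the model variable $\widetilde f_j=\sum_{p\in T_j}\one\{\widetilde v_p\ge1\}$ is a sum of independent $\mathrm{Bernoulli}(1/p)$, so $\lambda=H(T_j)$, the terms $\E[X_p(X_p-1)]$ vanish, and $\sum_{p\in T_j}(\E X_p)^2=H''(T_j)$, whence $d_{TV}(\widetilde f_j,Z_j)\ll H''(T_j)/(1+H(T_j))$. When $f_j=\Omega(n,T_j)$, the model variable $\widetilde f_j=\sum_{p\in T_j}\widetilde v_p$ is a sum of independent geometrics with $\E\widetilde v_p=1/(p-1)$, so $\lambda=H'(T_j)\ge H(T_j)$; a short computation gives $\E[\widetilde v_p(\widetilde v_p-1)]=2/(p-1)^2$, and the size-biasing step is transparent here since the size-biased geometric $\widetilde v_p^\ast$ has the distribution of $\widetilde v_p+\widetilde v_p'+1$ with $\widetilde v_p'$ an independent copy. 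Hence
\[
d_{TV}(\widetilde f_j,Z_j)\ll\frac{1}{1+H'(T_j)}\sum_{p\in T_j}\frac{3}{(p-1)^2}\ll\frac{H''(T_j)}{1+H(T_j)},
\]
using $H(T_j)\le H'(T_j)$ and $(p-1)^{-2}\le4p^{-2}$. Summing these bounds over $j$ and adding the $O(u^{-u})$ error from the model completes the proof.

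The step I expect to be the main obstacle is securing the factor $(1+H(T_j))^{-1}$ rather than merely $H''(T_j)$. A crude coordinatewise estimate — bounding $d_{TV}(\widetilde f_j,Z_j)$ by $\sum_{p\in T_j}d_{TV}(\widetilde v_p,\Pois(\cdot))$ — already yields $O(H''(T_j))$, but it loses the gain when $H(T_j)$ is large, so the Stein argument must be run globally with the sharp factor $(1-\er^{-\lambda})/\lambda$. A secondary technical point is that the Kubilius model must be applied to the entire exponent vector $(v_p)_{p\le y}$, not merely to the indicators $\one\{p\mid n\}$, and with the error uniform throughout $2\le y\le x$, so that it governs $\Omega(n,T_j)$ equally with $\omega(n,T_j)$.
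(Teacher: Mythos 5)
Your argument is correct and reaches the same conclusion, but it handles the key one-dimensional step by a genuinely different method. The paper proves a local limit theorem (Theorem \ref{kubilius_poisson}) by comparing the probability generating functions $\E z^{U_T}$, $\E z^{W_T}$ with $\er^{(z-1)H}$ and extracting coefficients by a contour integral, then sums the pointwise estimates over $k$ to obtain the total-variation bound (Corollary \ref{dTV-cor}). You instead invoke a Chen--Stein inequality for sums of independent $\ZZ_{\ge0}$-valued variables, which produces the Stein factor $(1-\er^{-\lambda})/\lambda\asymp(1+\lambda)^{-1}$ directly and stays entirely in a probabilistic register. The inequality you quote,
\[
d_{TV}\big(W,\Pois(\lambda)\big)\ll\frac{1}{1+\lambda}\sum_i\Big((\E X_i)^2+\E[X_i(X_i-1)]\Big),
\]
is indeed correct: writing $X_i^s$ for the size-biased version and $W^{(i)}=W-X_i$, the Stein identity gives $\E[\lambda g(W+1)-Wg(W)]=\sum_i\E X_i\,\big(\E g(W^{(i)}+X_i+1)-\E g(W^{(i)}+X_i^s)\big)$, and passing through $g(W^{(i)}+1)$ by the triangle inequality yields
$\E X_i\cdot\|\Delta g\|\big(\E X_i+\E[X_i^s]-1\big)=\|\Delta g\|\big((\E X_i)^2+\E[X_i(X_i-1)]\big)$ per summand; your evaluations of these moments in the Bernoulli and geometric cases, and the size-bias coupling $\widetilde v_p^\ast\deq\widetilde v_p+\widetilde v_p'+1$, are all right. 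The remaining ingredients — passage to the Kubilius model, monotonicity of $d_{TV}$ under a common pushforward, and subadditivity over independent products — coincide with the paper's. Each approach buys something: the paper's contour method yields the local limit theorem as a standalone result of independent interest, while yours is shorter, avoids complex analysis, and transparently identifies where the $H''$ gain comes from. One small gap: the Kubilius bound (Lemma \ref{Kubilus}) actually gives $O_\eps(u^{-u}+x^{-1+\eps})$, not $O(u^{-u})$ alone; the paper notes the extra $x^{-0.99}$ term is absorbed because $H''(T_i)\gg x^{-2/3}$ when $y\le x^{1/3}$, while $u^{-u}\gg1$ when $y>x^{1/3}$, and your write-up needs that same remark.
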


The implied constant is absolute, independent of
$m$, $y$, $x$ and $T_1,\ldots,T_m$.  In particular, if $m$ is fixed 
then this shows
that the joint distribution of  $(f_1,\ldots,f_m)$ converges
to a joint Poisson distribution whenever we have $y=x^{o(1)}$ and for each $i$,
either $H(T_i)\to \infty$ or $\min T_i \to \infty$.

By contrast, Tenenbaum's bound \eqref{tenenbaum} implies
\be\label{ten-TV}
d_{TV} \Big( (\omega(n,T_1),\ldots,\omega(n,T_m)),(Z_1,\ldots,Z_m) \Big) \ll_m \sum_{j=1}^m \frac{1}{\sqrt{H(T_j)}}.
\ee
Compared to Theorem \ref{thm-main},
we see that \eqref{ten-TV}
gives good results even if the sets $T_i$ contain many large primes,
while Theorem \ref{thm-main} requires that
$y\le x^{o(1)}$ in order to be nontrivial.
However, if $y\le x^{1/\log\log\log x}$, say,
the conclusion of Theorem \ref{thm-main} is stronger,
especially
when $H''(T)$ is small.
An extreme case is given by singleton set $T=\{p\}$ and $f_1=\Omega(n,T)$,
where Theorem \ref{thm-main} recovers the correct order
of $d_{TV}(f_1,Z_1)$, namely $1/p^2$,
since $\PR_x(p\|n) \approx \frac{1}{p}-\frac{1}{p^2}$, $\PR_x(p^2\|n)\approx \frac{1}{p^2}-\frac{1}{p^3}$, and
$\PR(Z_1=2) \approx 1/(2p^2)$ for large $p$.

\medskip

\noindent
\textbf{Example.}
Let $S$ be the set of all primes,
$t_k=\exp\exp k$ and
$\omega_k(n) := \omega(n,S \cap (t_k,t_{k+1}])$.
Here, by the Prime Number Theorem with strong error term,
\[
H(S \cap (t_k,t_{k+1}]) = 1 + O(\exp \{ - \er^{k/2} \}).
\]
Thus, $\omega_k$ has distribution close to that of a
Poisson variable with parameter 1.  More precisely, if 
$X,Y$ are Poisson with parameters $\lambda,\lambda'$, respectively,
then (e.g. \cite[Theorem 1.C, Remark 1.1.2]{BHJ})
\[
d_{TV}(X,Y) \le |\lambda-\lambda'|.
\]

Using a standard inequality for $d_{TV}$ (\eqref{dTV-vec} below), we deduce the following.

\begin{cor}\label{thm:Poisson-1}
If $\xi \le k < \ell \le \log\log x - \xi$, then
\be\label{Poisson-1}
d_{TV}\big( (\omega_k,\ldots,\omega_\ell),(Z_k',\ldots,Z_\ell') \big)
\ll \exp \{-\er^{\xi/2}\},
\ee
where $Z_k',\ldots,Z_\ell'$ are independent Poisson variables with parameter
1. 
\end{cor}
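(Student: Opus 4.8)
The plan is to deduce \eqref{Poisson-1} from Theorem~\ref{thm-main} applied to the sets $T_j:=S\cap(t_j,t_{j+1}]$ with $k\le j\le\ell$, taking $f_j=\omega(n,T_j)=\omega_j$ and $Z_j\deq\Pois(H(T_j))$, and then swapping the parameter $H(T_j)$ for $1$ using the Prime Number Theorem. The sets $T_j$ are disjoint and nonempty (nonempty since, by the estimate recorded in the Example above, $H(T_j)\to1$, so each $T_j$ contains a prime once $j$ is large). We may assume that $\xi$ exceeds a suitable absolute constant, since otherwise $\exp\{-\er^{\xi/2}\}$ is bounded below and \eqref{Poisson-1} holds trivially from $d_{TV}\le1$.

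First I would set $y=t_{\ell+1}=\exp\exp(\ell+1)$, so that every $T_j\subseteq[2,y]$. Then $\log y=\er^{\ell+1}$, and the hypothesis $\ell\le\log\log x-\xi$ gives $\log y\le\er^{1-\xi}\log x$; hence $y\le x$ and $u=\log x/\log y\ge\er^{\xi-1}$, so $u^{-u}\le\exp\{-(\xi-1)\er^{\xi-1}\}$, which is far below $\exp\{-\er^{\xi/2}\}$ for large $\xi$. Next, every prime of $T_j$ exceeds $t_j$, whence $H''(T_j)\le\sum_{n>t_j}n^{-2}\ll1/t_j=\exp\{-\er^{j}\}$, while $1+H(T_j)\ge1$; summing this rapidly decreasing tail over $j\ge k\ge\xi$ gives $\sum_{j=k}^{\ell}H''(T_j)/(1+H(T_j))\ll\exp\{-\er^{k}\}\le\exp\{-\er^{\xi/2}\}$. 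Feeding these two bounds into Theorem~\ref{thm-main} yields
\[
d_{TV}\big((\omega_k,\ldots,\omega_\ell),(Z_k,\ldots,Z_\ell)\big)\ll\exp\{-\er^{\xi/2}\}.
\]

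It then remains to compare $(Z_k,\ldots,Z_\ell)$ with $(Z_k',\ldots,Z_\ell')$. Here I would use the triangle inequality for $d_{TV}$ together with the subadditivity inequality \eqref{dTV-vec} for vectors of independent coordinates, reducing matters to $\sum_{j=k}^{\ell}d_{TV}(Z_j,Z_j')$; since $d_{TV}(\Pois(\lambda),\Pois(\lambda'))\le|\lambda-\lambda'|$ (cf.\ \cite{BHJ}), this is $\le\sum_{j=k}^{\ell}|H(T_j)-1|$. By the Prime Number Theorem with classical error term, $|H(T_j)-1|\ll\exp\{-\er^{j/2}\}$, so the sum is $\ll\exp\{-\er^{k/2}\}\le\exp\{-\er^{\xi/2}\}$. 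Combining with the previous display via the triangle inequality gives \eqref{Poisson-1}.

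The argument is little more than bookkeeping across three scales: the sieve error $u^{-u}$, the contribution $H''(T_j)\asymp\exp\{-\er^{j}\}$ from Theorem~\ref{thm-main}, and the Poisson-parameter discrepancy $|H(T_j)-1|\asymp\exp\{-\er^{j/2}\}$. The last of these is the coarsest, and after summing over $j\ge k\ge\xi$ it is exactly what yields the claimed rate $\exp\{-\er^{\xi/2}\}$; the only thing to check is that the other two terms are genuinely negligible by comparison, which the choice $y=t_{\ell+1}$ and the hypothesis $\ell\le\log\log x-\xi$ ensure. I do not foresee any real obstacle — all the substance has already been expended in proving Theorem~\ref{thm-main}.
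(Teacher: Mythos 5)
Your proof is correct and follows exactly the route the paper intends: apply Theorem~\ref{thm-main} to the sets $T_j=S\cap(t_j,t_{j+1}]$ with $y=t_{\ell+1}$, then replace each $\Pois(H(T_j))$ by $\Pois(1)$ using the bound $d_{TV}(\Pois(\lambda),\Pois(\lambda'))\le|\lambda-\lambda'|$ together with \eqref{dTV-vec}. The paper leaves this bookkeeping implicit ("we deduce the following"); your write-up fills in the same three estimates — the $u^{-u}$ sieve error, the $H''(T_j)$ sum, and the $\sum|H(T_j)-1|$ discrepancy — and correctly identifies the last as the dominant one.
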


Thus, statistics of the random \emph{function}
$f(t)=\omega(n,S\cap [t_k,t])$, $t_k\le t\le t_\ell$, are captured very accurately
by statistics of the partial sums $Z_k'+\cdots+Z_m'$ for $k\le m\le \ell$.
The latter has been well-studied
and one can easily deduce, for example, the Law of the Iterated
Logarithm for $f(t)$ from that for the partial sums 
$Z_k'+\cdots + Z_\ell'$.
Similarly, if $T$ is a set of primes with density $\a>0$
in the sense that
\[
\sum_{p\le x,p\in T} \frac{1}{p} = \a \log\log x + c + o(1)
\quad (x\to \infty)
\]
then a statement similar to \eqref{Poisson-1} holds
 with $t_k$ replaced
by $t_k'=\exp\exp (k/\a)$, with a weaker estimate for the
total variation distance
(depending on the decay of the $o(1)$ term).

%
%

Next, we establish the upper-bound implied in
 \eqref{tenenbaum}, but valid
uniformly for all $k_1,\ldots,k_m$. 

\begin{thm}\label{prime_factors_sets}
Let $T_1,\ldots,T_r$ be arbitrary disjoint, nonempty subsets of the primes $\le x$.
For any $k_1,\ldots,k_r\ge 0$, letting
$P = \PR_x \big( \omega(n;T_j) = k_j \; (1\le j\le r) \big)$, we have
\begin{align*}
P &\ll  \prod_{j=1}^r \Bigg(\frac{H'(T_j)^{k_j}}{k_j!} \er^{-H(T_j)} \Bigg) \( \eta + 
\frac{k_1}{H'(T_1)}+\cdots+\frac{k_r}{H'(T_r)}\)+ \xi\\
&\le  \prod_{j=1}^r \Bigg(\frac{(H(T_j)+2)^{k_j}}{k_j!} \er^{-H(T_j)} \Bigg),
\end{align*}
where $\eta=0$ if $T_1\cup \cdots \cup T_r$ contains every prime $\le x$
and $\eta=1$ otherwise, and $\xi=1$ if $\eta=k_1=\cdots=k_r=0$ and $\xi=0$
otherwise.
\end{thm}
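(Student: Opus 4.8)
We reduce everything to bounding $N(x;\mathbf k):=\#\{n\le x:\omega(n,T_j)=k_j\ (1\le j\le r)\}$, and may assume every prime of $T:=T_1\cup\cdots\cup T_r$ is $\le x$; put $K=k_1+\cdots+k_r$. Since the $T_j$ are disjoint, $\sum_j H''(T_j)=H''(T)\le\sum_p p^{-2}<1$, so $\prod_j\frac{H'(T_j)^{k_j}}{k_j!}\er^{-H(T_j)}$ agrees with the joint Poisson density $\prod_j\frac{H'(T_j)^{k_j}}{k_j!}\er^{-H'(T_j)}$ up to an absolute constant; this is what will keep the implied constant independent of $r$. I would first dispose of \emph{extreme} multi-indices by a bare Rankin argument: for any $z_j\ge1$,
\[
N(x;\mathbf k)\le\prod_j z_j^{-k_j}\sum_{n\le x}\prod_j z_j^{\omega(n,T_j)}\le x\prod_j z_j^{-k_j}\sprod{p\in T_j}\Bigl(1+\tfrac{z_j-1}{p}\Bigr)\le x\prod_j z_j^{-k_j}\er^{(z_j-1)H(T_j)},
\]
with no implied constant. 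Optimising the $z_j$ and invoking Stirling, a short computation shows that once $K$ exceeds, say, $\log x/(\log\log x)^{2}$, the resulting bound already lies below the one asserted; so henceforth $\mathbf k$ is moderate, the asserted bound is $\gg x^{1-o(1)}$, and any error of size $x^{1-\delta}$ below is negligible.

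The core is a Hardy--Ramanujan/Landau recursion obtained by weighting with $\log n=\sum_{p^\nu\|n}\nu\log p$:
\[
\sum_{n\le x}(\log n)\,\one[\omega(n,T_j)=k_j\ \forall j]=\ssum{p,\ \nu\ge1}\nu\log p\cdot\#\{m\le x/p^\nu:p\nmid m,\ \omega(m,T_j)=k_j-\one[p\in T_j]\ \forall j\}.
\]
Bounding the left side below by $(1-o(1))(\log x)\,N(x;\mathbf k)$ on the dyadic top range and iterating downward gives
\[
(\log x)\,N(x;\mathbf k)\ll\sum_{i:\,k_i\ge1}\sum_{p\in T_i}\sum_{\nu\ge1}\nu\log p\cdot N(x/p^\nu;\mathbf k-\mathbf e_i)\ +\ \ssum{p\le x\\ p\notin T}\sum_{\nu\ge1}\nu\log p\cdot N(x/p^\nu;\mathbf k)\ +\ O(x),
\]
where $O(x)$ absorbs the prime-power ($\nu\ge2$) terms and the lower dyadic tail and is negligible. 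The first sum is a genuine descent in $K$: using $\sum_{p\in T_i}\sum_{\nu\ge1}\nu p^{-\nu}\log p=\sum_{p\in T_i}\tfrac{\log p}{p}(1+O(1/p))$, the exact identities $\sum_{p\in T_i}\tfrac1{p-1}=H'(T_i)$ and $\tfrac1{k_i}\cdot\tfrac{H'(T_i)^{k_i}}{(k_i-1)!}=\tfrac{H'(T_i)^{k_i}}{k_i!}$, and the fact --- exactly as in Landau's theorem $\#\{n\le x:\omega(n)=k\}\sim\frac{x}{\log x}\frac{(\log\log x)^{k-1}}{(k-1)!}$ --- that the primes with $\sqrt x<p^\nu\le x$ supply the correct $1/k_i!$ in place of Rankin's lossy $k_i^{-k_i}\er^{k_i}$, one reproduces the target with \emph{the same} constant. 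The second sum is stationary in $K$; iterating it is a standard sieve recursion whose solution carries a factor $\prod_{p\in T}(1-1/p)\asymp\er^{-H(T)}=\prod_j\er^{-H(T_j)}$ (in Dirichlet-series terms this is the $\er^{-H(T)}/(s-1)$ behaviour of $\prod_{p\notin T}(1-p^{-s})^{-1}$ at $s=1$) --- this is the source of the $\er^{-H(T_j)}$; and when $\eta=0$, i.e.\ $T$ is all of $\{p\le x\}$ and the second sum is empty, the extra saving $\sum_i k_i/H'(T_i)$ is precisely what the descent alone produces.

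So I would run an induction on $K$, the base case $K=0$ being $N(x;\mathbf 0)=\#\{n\le x:p\nmid n\ \forall p\in T\}$, which by a uniform upper-bound sieve (Selberg or Brun applied only to the primes $\le\sqrt x$, the tail $\prod_{\sqrt x<p\le x}(1-1/p)^{-1}\asymp1$ being absorbed by Mertens) is $\ll x\prod_{p\in T}(1-1/p)\asymp x\prod_j\er^{-H(T_j)}$ when $\eta=1$ and is exactly $1$ when $\eta=0$ --- matching $\eta\prod_j\er^{-H(T_j)}+\xi$. The step I expect to be hardest is closing this induction with an \emph{absolute} constant, uniformly in $r$, in $\mathbf k$, and in the $T_j$: one must track through every descent step (a) the $\eta/\xi$ dichotomy, and (b) the dependence of $H(T_j\cap[1,y])$, $H'(T_j\cap[1,y])$, $H''(T_j\cap[1,y])$ on the truncation $y=x/p^\nu$ --- for $p^\nu$ close to $x$ the saving $\er^{-H(T\cap[1,y])}$ degenerates, and such terms must be handled crudely and shown negligible, using $H(T\cap[1,y])=H(T)+O(1)$ once $y\ge\sqrt x$ and $\sum_{p^\nu>\sqrt x}\nu p^{-\nu}\log p\ll1$. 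Finally, the second displayed inequality follows from the first by routine estimates: $H'(T_j)\le H(T_j)+2H''(T_j)\le H(T_j)+1$ since $H''(T_j)<\tfrac12$, Stirling, and $\eta+\sum_j k_j/H'(T_j)\le\prod_j\bigl((H(T_j)+2)/H'(T_j)\bigr)^{k_j}$ (from $(H(T_j)+2)/H'(T_j)\ge1+1/H'(T_j)$ together with $\prod_j(1+c_j)^{k_j}\ge1+\sum_j k_jc_j$), after treating the single degenerate case $\eta=k_1=\cdots=k_r=0$ directly, where $P=1/x\ll1/\log x\asymp\prod_j\er^{-H(T_j)}$.
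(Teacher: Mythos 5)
Your first step is the right one: the Hardy--Ramanujan/Landau ``weighting by $\log n$'' identity is exactly what the paper uses (it calls it the Wirsing trick, starting from $\log x \ll \sum_{p^a\|n}\log p^a$ for $n\in[x^{1/3},x]$). But after that you propose an induction on $K=k_1+\cdots+k_r$, and you yourself flag the closing of that induction as the hardest step --- controlling the $\eta/\xi$ dichotomy, the truncation $y=x/p^\nu$, the degeneration when $p^\nu$ is close to $x$, all uniformly in $r$ and in the $T_j$. None of that is carried out, and it is genuinely delicate; the proposal as written has a hole precisely there.

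The missing idea that makes the problem easy is that \emph{no iteration is needed at all}. After the single application of the $\log n$ identity and Chebyshev's bound $\sum_{p^a\le x/h}\log p^a\ll x/h$, one is left with a harmonic sum
\[
L_t(x)=\ssum{h\le x \\ \omega(h;T_j)=k_j-\one_{j=t}\ \forall j}\frac{1}{h}\qquad(0\le t\le r),
\]
and this can be bounded in one stroke by \emph{dropping the constraint $h\le x$} and using multiplicativity: decompose $h=h_1\cdots h_r h'$ with $h_j$ supported on $T_j$ (so $\omega(h_j;T_j)=m_j:=k_j-\one_{t=j}$ and $\sum_{h_j}1/h_j\le H'(T_j)^{m_j}/m_j!$) and $h'$ supported on primes $\le x$ outside $T_1\cup\cdots\cup T_r$ (so $\sum_{h'}1/h'\ll(\log x)\prod_{p\in\cup T_j}(1-1/p)\le(\log x)\,\er^{-\sum_j H(T_j)}$ by Mertens). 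Multiplying these three factors gives $L_t(x)\ll(\log x)\prod_j H'(T_j)^{m_j}\er^{-H(T_j)}/m_j!$ with an absolute constant, and summing over $t$ already produces the stated bound, $\eta$ and $\xi$ falling out of which $t$-values occur. There is no recursion, no truncation to track, and the uniformity in $r$, $\mathbf k$, and the $T_j$ is automatic. Your Rankin preliminary reduction is also unnecessary. (Your treatment of the second, cruder inequality, via $H'(T)\le H(T)+1$ and $\prod_j(1+c_j)^{k_j}\ge 1+\sum_j k_jc_j$, does match the paper's.)
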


\noindent
\textbf{Remarks.}
Tudesq \cite{tudesq} claimed a bound similar to Theorem \ref{prime_factors_sets},
but only supplied details for $r=1$.
Our method is similar, and we give a short, complete
proof in Section \ref{sec:primes_sets}.

If we condition on $\omega(n)=k$, 
 the $r=2$ case
of Theorem \ref{prime_factors_sets} supplies tail
bounds for $\omega(n,T)$.
If $X,Y$ are independent Poisson random variables with parameters
$\lambda_1,\lambda_2$, respectively, then for $0\le \ell\le k$, we have
\[
\PR ( X = \ell | X+Y = k) = \binom{k}{l} \pfrac{\lambda_1}{\lambda_1+\lambda_2}^\ell
\pfrac{\lambda_2}{\lambda_1+\lambda_2}^{k-\ell}.
\]
Thus, conditional on $\omega(n)=k$ 
we expect that $\omega(n,T)$ will have roughly a binomial
distribution with parameter $\a=H(T)/H(S)$,
where $S$ is the set of all primes in $[2,x]$.

\begin{thm}\label{conditional}
Fix $A>1$ and suppose that $1 \le k\le A\log\log x$.
Let $T$ be a nonempty subset of the primes in $[2,x]$
and define let $\a=H(T)/H(S)$.
For any $0\le \psi \le \sqrt{\a k}$ we have
\[
\PR \Big( |\omega(n,T)-\a k| \ge \psi\sqrt{\a(1-\a) k}\; \Big| \; \omega(n)=k \Big) \ll_A \er^{-\frac13 \psi^2},
\]
the implied constant depending only on $A$.
\end{thm}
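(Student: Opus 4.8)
The plan is to condition on $\omega(n)=k$ and run a Chernoff argument in which Theorem~\ref{prime_factors_sets} controls the conditional moment generating function of $\omega(n,T)$ while the Sathe--Selberg theorem \cite{sathe,selberg} controls the normalising probability $\PR_x(\omega(n)=k)$. Let $T^c$ be the set of primes in $[2,x]$ not lying in $T$, so $\omega(n)=\omega(n,T)+\omega(n,T^c)$, and assume (it will be harmless) that $\a\le\tfrac12$. Applying Theorem~\ref{prime_factors_sets} with $r=2$, $T_1=T$, $T_2=T^c$ (here $\eta=0$, since $T\cup T^c$ is the set of \emph{all} primes $\le x$, and $\xi=0$ since $k\ge1$) gives, for $0\le\ell\le k$,
\[
\PR_x\bigl(\omega(n,T)=\ell,\ \omega(n,T^c)=k-\ell\bigr)\ \ll\ \frac{H'(T)^\ell}{\ell!}\cdot\frac{H'(T^c)^{k-\ell}}{(k-\ell)!}\cdot\er^{-H(S)}\cdot\Bigl(\frac{\ell}{H'(T)}+\frac{k-\ell}{H'(T^c)}\Bigr).
\]
Multiplying through by $z^\ell$ and summing on $\ell$, and using the identity $\sum_{\ell}\frac{\ell\,x^\ell y^{k-\ell}}{\ell!(k-\ell)!}=x\,\frac{(x+y)^{k-1}}{(k-1)!}$ together with its reflection $\ell\mapsto k-\ell$, the sum telescopes to
\[
\sum_{\ell\ge0}z^\ell\,\PR_x\bigl(\omega(n,T)=\ell,\ \omega(n,T^c)=k-\ell\bigr)\ \ll\ \er^{-H(S)}\,(z+1)\,\frac{\bigl(zH'(T)+H'(T^c)\bigr)^{k-1}}{(k-1)!}.
\]

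Dividing by the Sathe--Selberg lower bound $\PR_x(\omega(n)=k)\gg_A\frac{(\log\log x)^{k-1}}{(k-1)!\,\log x}$ (uniform for $1\le k\le A\log\log x$), and using $H'(T)=H(T)+O(1)$, $H'(T^c)=H(T^c)+O(1)$ and $H(T)+H(T^c)=H(S)=\log\log x+O(1)$ --- so the accumulated $O(1)$ slippage, raised to the power $k-1\le A\log\log x$, costs only a factor $\er^{O(A)}$ --- I get, for $z$ in any fixed bounded interval,
\[
\sum_{\ell\ge0}z^\ell\,\PR\bigl(\omega(n,T)=\ell\mid\omega(n)=k\bigr)\ \ll_A\ (1-\a+\a z)^{k-1};
\]
that is, conditionally on $\omega(n)=k$ the exponential moments of $\omega(n,T)$ are, to within a factor $\ll_A1$, those of a binomial variable $\mathrm{Bin}(k-1,\a)$. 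Now apply Chernoff to each tail: with $t=\psi\sqrt{\a(1-\a)k}$,
\[
\PR\bigl(\omega(n,T)\ge\a k+t\mid\omega(n)=k\bigr)\ \le\ z^{-(\a k+t)}\sum_{\ell}z^\ell\,\PR\bigl(\omega(n,T)=\ell\mid\omega(n)=k\bigr)\qquad(z\ge1),
\]
and symmetrically with $0<z\le1$ for the lower tail. Optimising $z$ in the estimate above turns each of these into $\ll_A\exp\bigl(-kD(\a\pm\delta\,\|\,\a)\bigr)$, where $\delta=t/k$ and $D(a\|\a)=a\log\frac{a}{\a}+(1-a)\log\frac{1-a}{1-\a}$ is the relative entropy. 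The role of the hypothesis $\psi\le\sqrt{\a k}$ is exactly here: it is equivalent to $\delta\le\a\sqrt{1-\a}$, which both keeps the optimising $z$ inside a fixed bounded interval (so the estimate applies) and confines the relative deviation to $\delta/\a\le\sqrt{1-\a}\le1$.

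What is left is the rate-function inequality
\[
D\bigl(\a(1\pm\tau)\,\big\|\,\a\bigr)\ \ge\ \frac{\a\,\tau^{2}}{3(1-\a)}\qquad\bigl(0\le\tau\le\sqrt{1-\a},\ 0<\a\le\tfrac12\bigr):
\]
since $\psi^{2}=k\delta^{2}/(\a(1-\a))$, this is precisely what converts each Chernoff bound into $\exp(-\psi^{2}/3)$, and summing the two tails yields the theorem. I expect this inequality to be the crux. One cannot simply replace $D$ by the Poisson relative entropy $\a k\,h(\delta/\a)$ with $h(u)=(1+u)\log(1+u)-u\ge u^{2}/3$ on $[0,1]$: that only delivers the exponent $\tfrac13(1-\a)\psi^{2}$ and so loses the factor $1-\a$, and one genuinely needs the thinner binomial tails. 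In the limiting regime $\a\to0$, $\tau\to1$ the inequality degenerates to $2\log2-1>\tfrac13$, so it holds with a slim but strictly positive margin, and for each fixed $\a$ it is a one-variable check on $[0,\sqrt{1-\a}]$. The degenerate cases are disposed of directly: if $\psi^{2}$ or $\a k$ is bounded then $\exp(-\psi^{2}/3)\gg_A1$ dominates the probability trivially; if $\a k\pm t$ leaves $[0,k]$ the corresponding tail is empty; and the residual range of $\psi$ left uncovered by the reduction to $\a\le\tfrac12$ is treated by estimating the relevant truncated sum $\sum_{\ell}z^\ell\PR_x(\cdots)$ directly from Theorem~\ref{prime_factors_sets}, which again comfortably beats $\exp(-\psi^{2}/3)$.
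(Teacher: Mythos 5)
Your proposal is correct in outline and takes a recognizably different route from the paper. The paper also reduces, via Theorem~\ref{prime_factors_sets} with $T_1=T$, $T_2=S\setminus T$ and the Sathe--Selberg lower bound for $\PR_x(\omega(n)=k)$, to a binomial comparison --- but it does so \emph{pointwise}: it shows $\PR(\omega(n,T)=h\mid\omega(n)=k)\ll_A \alpha^h(1-\alpha)^{k-h}\binom{k}{h}\er^{O_A(h/(\alpha k))}$, splits the tail sum into $h<100\alpha k$ (where the extra exponential factor is $O_A(1)$, and one applies Lemma~\ref{binomial_tails}) and $h\ge 100\alpha k$ (handled directly by a Poisson-tail estimate, using $\psi\le\sqrt{\alpha k}$ exactly at the step $\er^{-100\alpha k}\le\er^{-100\psi^2}$). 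You instead bound the conditional \emph{moment generating function} in one stroke via the telescoping identity $\sum_\ell \ell\,x^\ell y^{k-\ell}/(\ell!(k-\ell)!)=x(x+y)^{k-1}/(k-1)!$, obtaining $\E\big[z^{\omega(n,T)}\mid\omega(n)=k\big]\ll_A(1-\alpha+\alpha z)^{k-1}$ for bounded $z$, and then run Chernoff. That shortcut is genuinely cleaner: you never need the pointwise $\er^{O_A(h/(\alpha k))}$ correction or the $h\gtrless100\alpha k$ dichotomy, because the slippage is absorbed once into the MGF. Both proofs bottom out at the same rate-function inequality $D(\alpha(1\pm\tau)\,\|\,\alpha)\ge \alpha\tau^2/(3(1-\alpha))$ on $0\le\tau\le\sqrt{1-\alpha}$, $\alpha\le\tfrac12$; this is precisely the second inequality of the paper's Lemma~\ref{binomial_tails} (with $\beta=\alpha(1\pm\tau)$), so you can discharge it by the same citation (Ash; Dartyge--Tenenbaum) rather than re-deriving it as a ``one-variable check.'' One cosmetic point: you get exponent $(k-1)D$ rather than $kD$, but since $D\ll 1$ in the allowed range the loss is a bounded factor, and in any case the regime $\alpha k=O(1)$ is handled trivially, as you note. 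Your observation about why the Poisson rate function $h(u)\ge u^2/3$ would only deliver $\tfrac13(1-\alpha)\psi^2$, and hence the binomial entropy is genuinely needed, is an accurate and useful remark that the paper does not make explicit.
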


Similarly, if $T_1,\ldots,T_m$ are disjoint subsets of primes $\le x$
and we condition on $\omega(n)=k$, then the vector
$(\omega(n,T_1),\ldots,\omega(n,T_m))$ will have approximately
a multinomial distribution.


\section{The Kubilius model of small prime factors of integers}

Our restriction to  primes below $x^{o(1)}$
comes from an application of a probabilistic model of
prime factors, called the Kubilius model, 
and introduced by
Kubilius \cite{kub56, kubilius} in 1956.
We compute
\[
\PR_x (v_p=k) = \frac{1}{\fl{x}} \( \fl{\frac{x}{p^k}} - \fl{\frac{x}{p^{k+1}}} \)  = \frac{1}{p^k} - \frac{1}{p^{k+1}} + O\pfrac{1}{x},
\]
the error term being relatively small when $p^k$ is small.
Moreover, the variables $v_p$ are quasi-independent; that is, the 
correlations are small, again provided that the primes are small.
By contrast,
the variables $v_p$ corresponding to large $p$ are very much dependent, for example the event $(v_p>0, v_q>0)$ is impossible if $pq>x$.

The model of Kubilius is a sequence of \emph{idealized} random variables 
which removes the error term above, and is much easier to compute with.
For each prime $p$, define the random variable $X_p$ that has domain $\NN_0=\{0,1,2,3,4,\ldots\}$
and such that
\[
\PR (X_p=k) = \frac{1}{p^k} - \frac{1}{p^{k+1}} = \frac{1}{p^k} \(1-\frac{1}{p} \) \qquad (k=0,1,2,\ldots).
\]

The principal result, first proved by Kubilius and
later sharpened by others, is that  the random vector
\[
\XX_y = (X_p:p\le y)
\]
has distribution close to that of the random vector
\[
\VV_{x,y} = (v_p: p\le y),
\]
provided that $y=x^{o(1)}$.

In \cite{ten99}, Tenenbaum gives a rather complicated 
asymptotic for $d_{TV}(\XX_y,\VV_{x,y})$
in the range $\exp\{(\log x)^{2/5+\eps}\}\le y\le x$,
as well as a simpler universal upper bound which we state here.

\begin{lem}[{Tenenbaum \cite[Th\'eor\`eme 1.1 and (1.7)]{ten99}}]\label{Kubilus}
Let $2\le y\le x$.  Then, for every $\eps>0$,
\[
d_{TV}(\XX_y,\VV_{x,y}) \ll_\eps u^{-u} + x^{-1+\eps}, \quad u=\frac{\log x}{\log y}.
\]
\end{lem}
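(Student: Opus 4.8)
Both $\XX_y$ and $\VV_{x,y}$ are supported on the same countable set, which we identify with the $y$-smooth positive integers via $(a_p)_{p\le y}\mapsto d=\prod_{p\le y}p^{a_p}$. Writing $V(y)=\prod_{p\le y}(1-1/p)$ and, for real $t\ge1$, letting $\Phi(t,y)=\#\{n\le t:\ p\mid n\Rightarrow p>y\}$ count the $y$-rough integers up to $t$, one has $\PR(\XX_y\leftrightarrow d)=V(y)/d$, while the event $\{v_p=a_p\text{ for all }p\le y\}$ says exactly that the $y$-smooth part of the random $n\le\fl x$ equals $d$ (and $n/d$ is $y$-rough), so $\PR(\VV_{x,y}\leftrightarrow d)=\Phi(\fl x/d,y)/\fl x$. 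Both are probability measures on the $y$-smooth integers --- for the second this is the partition identity $\sum_d\Phi(\fl x/d,y)=\fl x$ --- whence the exact formula
\[
2\,d_{TV}(\XX_y,\VV_{x,y})=\sum_{P^+(d)\le y}\Bigl|\,\frac{\Phi(\fl x/d,y)}{\fl x}-\frac{V(y)}{d}\,\Bigr|.
\]
Everything reduces to controlling $\Phi(t,y)-tV(y)$, weighted by $1/d$ and summed over $y$-smooth $d$.

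The mass at large $d$ is negligible for \emph{both} measures. Put $\Psi(t,y)=\#\{n\le t:P^+(n)\le y\}$ and $D=\fl x/y^{\sqrt u}$. Using the classical reciprocal estimate $\sum_{d\le z,\,P^+(d)\le y}1/d=V(y)^{-1}\bigl(1+O(\rho(\log z/\log y))\bigr)$ with $\rho$ Dickman's function, together with $\Psi(x,y)\ll x\rho(u)$ and the elementary bound $\Phi(t,y)\ll\max(1,tV(y))$, one finds that the total variation mass carried by $\{d>D\}$ is $\ll\rho(u-\sqrt u)$, hence $\ll u^{-u}$ for $u$ large (and $\ll 1$, so $\ll u^{-u}$ with an absolute constant, for bounded $u$). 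It thus suffices to handle the range $d\le D$, where $v_d:=\log(\fl x/d)/\log y\ge\sqrt u$.

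For $d\le D$ the single analytic input is a sharp uniform estimate for $\Phi(t,y)$ --- de Bruijn's theorem, equivalently a careful application of Perron's formula to the Dirichlet series $\sum_{P^-(n)>y}n^{-s}=\zeta(s)\prod_{p\le y}(1-p^{-s})$, shifting the contour past the simple pole at $s=1$ (residue $tV(y)$): for $t\ge y^{\sqrt u}$,
\[
\Phi(t,y)=t\,V(y)\bigl(1+O(\rho(v))\bigr)+O\!\big(x^{\eps}\big),\qquad v=\log t/\log y,
\]
the \emph{relative} error being of genuine Dickman size --- the soft form of the fundamental lemma of the sieve gives only $v^{-v/2}$, which would degrade the final exponent to $u^{-u/2}$, so the sharp form is essential --- and the additive term being the residue of the contour shift and the integer parts. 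Inserting this, the $d$-summand becomes $\ll V(y)\rho(v_d)/d+O(x^{\eps-1})$; grouping the $y$-smooth $d\le D$ by $j=\fl{\log d/\log y}$ and applying the reciprocal estimate block by block, $\sum_{y^j\le d<y^{j+1},\,P^+(d)\le y}1/d\ll(j+1)\rho(j)/V(y)$, the main term collapses to a convolution of Dickman functions, $\sum_{0\le j\le u}(j+1)\,\rho(u-j)\,\rho(j)$, while the additive terms sum to $\ll x^{-1+\eps}$ (the integer-part errors contributing $O(\Psi(x,y)/x)$, which is $\ll u^{-u}$ when $y\ge\log x$ and $\ll x^{-1+\eps}$ otherwise).

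It remains to see that $\sum_{0\le j\le u}(j+1)\rho(u-j)\rho(j)\ll u^{-u}$ for $u$ large (for bounded $u$ the asserted bound is trivial). Since $(\log\rho)'(v)\sim-\log v$, the product $\rho(u-j)\rho(j)$ is maximised at $j=u/2$, where it equals $\rho(u/2)^2$; and because Dickman's function decays faster than $v^{-v}$ --- precisely $\rho(v)=v^{-v}(\log v)^{-v}\er^{\,v+o(v)}$ --- one obtains $\rho(u/2)^2=u^{-u}\exp\{-u\log\log u\,(1+o(1))\}$, which absorbs both the polynomial weight $(j+1)$ and the $O(u)$ terms of the sum. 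Collecting the three contributions yields $d_{TV}(\XX_y,\VV_{x,y})\ll u^{-u}+x^{-1+\eps}$. The argument hinges on two substantial points: the sharp uniform estimate for $\Phi(t,y)$ with Dickman-size relative error (essentially de Bruijn's theorem --- the harder of the two, where one must push the contour so that every secondary error lands inside $x^{-1+\eps}$), and this final collapse of the Dickman convolution; the reduction to the sum over $d$, the smooth-number reciprocal estimates, and the bookkeeping of errors are comparatively routine.
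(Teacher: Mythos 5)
First, note that the paper does not prove this lemma at all: it is quoted from Tenenbaum's paper \emph{Crible d'\'Eratosth\`ene et mod\`ele de Kubilius}, so what you are really attempting is a reproof of that theorem. Your reduction is the standard and correct starting point: identifying vectors with $y$-smooth integers $d$, the exact identity $2\,d_{TV}(\XX_y,\VV_{x,y})=\sum_{P^+(d)\le y}\bigl|\Phi(\fl x/d,y)/\fl x-V(y)/d\bigr|$, the disposal of the tail $d>D$, and the final Dickman-convolution estimate $\sum_j (j+1)\rho(j)\rho(u-j)\ll u^{-u}$ are all sound (the last because $\rho(u/2)^2=\exp\{-u\log u-u\log\log u+O(u)\}$). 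The problem is that your ``single analytic input'' is precisely where the whole difficulty of the theorem lives, and as stated it is both misattributed and unavailable. De Bruijn's theorem gives $\Phi(t,y)=(t\,\omega(v)-y)/\log y+O\bigl(t/\log^2 y\bigr)$, whose error is of \emph{relative} size $1/\log y$, not $\rho(v)$; moreover even the main terms $t\,\omega(v)/\log y$ and $t\,V(y)$ already differ by a relative $O(1/\log y)$ because Mertens' theorem carries that error. Feeding any estimate with relative error $1/\log y$ into your sum over smooth $d$ produces a bound of order $1/\log y$ for the total variation distance, not $u^{-u}$; and the soft fundamental lemma, as you note, only gives $u^{-u/2}$. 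A genuinely uniform estimate of the shape $\Phi(t,y)=tV(y)(1+O(\rho(v)))+O(x^{\eps})$ cannot simply be waved in via ``shift the contour past $s=1$'': the Perron integral for $\zeta(s)\prod_{p\le y}(1-p^{-s})t^s/s$ is not absolutely convergent, the product can be as large as $\exp\{c\,y^{1-\sigma}/\log y\}$ off the real axis, and controlling the truncation and saddle-point errors uniformly in $2\le y\le x$, $t\ge y^{\sqrt u}$, so that every loss lands inside $\rho(v)\,tV(y)+x^{\eps}$, is essentially the content of the Saias--Tenenbaum sharp analysis of $\Phi(t,y)$ and of Tenenbaum's paper itself. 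So the proposal, as written, assumes the theorem's hard core.

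There is also a quantitative slip in your error bookkeeping: an additive $O(x^{\eps})$ per value of $d$, summed over the $\asymp\Psi(x,y)\asymp x\rho(u)$ smooth moduli $d\le D$ and divided by $x$, contributes $\asymp x^{\eps}\rho(u)$, which is \emph{not} $\ll u^{-u}+x^{-1+\eps}$ once $y$ is of moderate size (e.g.\ $y=\exp\{\sqrt{\log x}\}$); only an additive error of size $O(1)$ per $d$ (as for the integer-part terms you mention) is acceptable, so the additive error allowed in the $\Phi$ estimate must be far smaller than $x^{\eps}$, or must be shown to occur for very few $d$. In short: the skeleton (exact TV formula, tail truncation, convolution of Dickman functions) matches the kind of argument that underlies Tenenbaum's theorem, but the decisive ingredient --- a $\Phi(t,y)$ estimate with Dickman-size relative error, uniform in the full range --- is neither a citable classical fact nor proved here, so the proof has a genuine gap.
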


%
%
\section{Poisson approximation of prime factors}

  For a finite set $T$ of primes, denote
\[
U_T = \# \{p\in T: X_p\ge 1 \}, \qquad
W_T = \sum_{p\in T} X_p,
\]
which are probabilistic models
for $\omega(n,T)$ and 
 $\Omega(n,T)$, respectively.
 For any $T$
which is a subset of the primes $\le y=x^{1/u}$, 
Lemma \ref{Kubilus} implies that for any $\eps>0$,
\be\label{Kubilius-omega}
\begin{split}
d_{TV}(U_T,\omega(n,T)) &\ll_\eps  u^{-u} + x^{-1+\eps},\\
\quad d_{TV}(W_T,\Omega(n,T)) &\ll_\eps  u^{-u} + x^{-1+\eps}.
\end{split}
\ee 

 We next prove a local limit theorem for $U_T$ and $W_T$,
 and then use this to establish Theorem \ref{thm-main}.

 \begin{thm}\label{kubilius_poisson}
 Let $T$ be a finite subset of the primes,
 and let $Y=U_T$ or $Y=W_T$. 
Let $H=H(T)$ if $Y=U_T$ and $H=H'(T)$
if $Y=W_T$.  Also let $Z\deq \Pois(H)$.
 Then
 \[
 \PR \( Y = k \) - \PR (Z=k ) \ll \begin{cases}
 H''(T) \frac{H^k}{k!}\er^{-H} \(\frac{1}{k+1}+\pfrac{k-H}{H}^{2} \) & \text{ if } 0\le k\le 1.9H \\  H''(T)\pfrac{\er^{0.9 H}}{(1.9)^k} & \text{ if } k>1.9H.
 \end{cases}
\] 
\end{thm}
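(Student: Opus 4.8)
The plan is to pass to probability generating functions and extract coefficients by Cauchy's formula. Using the generating function of the Bernoulli variable $\one[X_p\ge 1]$ when $Y=U_T$, and that of $X_p$ itself when $Y=W_T$, together with the factorization $\er^{H(z-1)}=\prod_{p\in T}\er^{(z-1)/\tilde p}$ (where $\tilde p=p$ if $Y=U_T$ and $\tilde p=p-1$ if $Y=W_T$), a direct computation gives
\[
\E z^{Y}=\er^{H(z-1)}G(z),\qquad G(z)=\prod_{p\in T}g_p(z),
\]
with $g_p(z)=\big(1+\tfrac{z-1}{p}\big)\er^{-(z-1)/p}$ in the $U_T$ case and $g_p(z)=\er^{-(z-1)/(p-1)}\big/\big(1-\tfrac{z-1}{p-1}\big)$ in the $W_T$ case. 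In either case $g_p$, hence $G$, is analytic on $|z|<2$ (for $W_T$ the only singularities are simple poles at $z=p\ge 2$, and the product converges since $\sum_p|g_p(z)-1|<\infty$ there). Since $\PR(Z=k)=[z^k]\er^{H(z-1)}=\er^{-H}H^k/k!$, Cauchy's formula gives, for every $0<r\le 1.9$,
\[
\PR(Y=k)-\PR(Z=k)=[z^k]\big(\er^{H(z-1)}(G(z)-1)\big)=\frac{1}{2\pi i}\oint_{|z|=r}\frac{\er^{H(z-1)}\big(G(z)-1\big)}{z^{k+1}}\,dz .
\]

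The key input, and the step I expect to require the most care, is the estimate
\[
|G(z)-1|\ll H''(T)\,|z-1|^2\qquad (|z|\le 1.9),
\]
with an \emph{absolute} implied constant. One checks that $g_p(1)=1$ and $g_p'(1)=0$ for every $p$, so $g_p(z)-1$ vanishes to order at least $2$ at $z=1$; expanding in powers of $(z-1)/\tilde p$ yields $|g_p(z)-1|\ll |z-1|^2/\tilde p^{2}$ uniformly for $p\in T$ and $|z|\le 1.9$. In the $W_T$ case the implied constant for $p=2$ stays absolute because $|z-2|\ge 0.1$ on the circle $|z|=1.9$; this is exactly why $1.9$, and not $2$, appears in the theorem. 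Summing over $p\in T$, using $\sum_{p\in T}\tilde p^{-2}\ll H''(T)$ and the fact that $H''(T)\le\sum_{p}p^{-2}<\tfrac12$ is bounded, we get $\sum_{p\in T}|g_p(z)-1|\ll H''(T)|z-1|^2\ll 1$, and hence
\[
|G(z)-1|\le\prod_{p\in T}\big(1+|g_p(z)-1|\big)-1\le\Big(\sum_{p\in T}|g_p(z)-1|\Big)\exp\Big(\sum_{p\in T}|g_p(z)-1|\Big)\ll H''(T)|z-1|^2 .
\]

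It remains to bound the contour integral. Parametrising $z=r\er^{i\theta}$ and inserting the last estimate,
\[
\big|\PR(Y=k)-\PR(Z=k)\big|\ll H''(T)\cdot\frac{1}{2\pi}\int_{-\pi}^{\pi}\frac{\er^{H(r\cos\theta-1)}\,|r\er^{i\theta}-1|^{2}}{r^{k}}\,d\theta .
\]
For $k=0$ one argues directly: the left side equals $\er^{-H}|G(0)-1|\ll \er^{-H}H''(T)$, which is admissible. For $1\le k\le 1.9H$, take $r=k/H\le 1.9$ and write $|r\er^{i\theta}-1|^2=(r-1)^2+2r(1-\cos\theta)$. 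A routine saddle-point estimate—using $1-\cos\theta\ge 2\theta^2/\pi^2$ on $[-\pi,\pi]$ and Stirling's formula—gives $\tfrac{1}{2\pi}\int_{-\pi}^{\pi}\er^{H(r\cos\theta-1)}r^{-k}\,d\theta\ll H^k\er^{-H}/k!$, while the extra factor $1-\cos\theta$ improves this by a further factor $\ll (Hr)^{-1}=1/k$ (equivalently, $I_0(Hr)-I_1(Hr)\ll (Hr)^{-1}I_0(Hr)$ for the modified Bessel functions). Since $r-1=(k-H)/H$ and $r\order 1$, the two pieces combine to $\ll\big(\big(\tfrac{k-H}{H}\big)^{2}+\tfrac{1}{k+1}\big)\tfrac{H^{k}}{k!}\er^{-H}$, the first case. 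For $k>1.9H$, take $r=1.9$: then $\er^{H(r\cos\theta-1)}\le \er^{0.9H}$ and $|1.9\,\er^{i\theta}-1|^{2}\le (2.9)^2$ for every $\theta$, so the integral is $\ll \er^{0.9H}/1.9^{k}$ and $|\PR(Y=k)-\PR(Z=k)|\ll H''(T)\,\er^{0.9H}/1.9^{k}$, which is the second case.
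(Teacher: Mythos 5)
Your argument is correct and follows essentially the same route as the paper: compare the probability generating function of $Y$ to that of the Poisson variable via Cauchy's formula on $|z|=r$, use the estimate $\E z^{Y}=\er^{(z-1)H}\bigl(1+O(H''(T)\,|z-1|^{2})\bigr)$ on $|z|\le 1.9$, and choose $r=k/H$ when $1\le k\le 1.9H$, $r=1.9$ when $k>1.9H$, with $k=0$ done by hand. The only differences are cosmetic: you make the key estimate explicit by factoring $\E z^{Y}=\er^{H(z-1)}G(z)$ with $G=\prod g_p$ and checking $g_p(1)=1$, $g_p'(1)=0$ (the paper simply asserts the two product asymptotics), and you evaluate the resulting angular integral via modified Bessel functions $I_0,I_1$ where the paper bounds it directly using $r\cos(2\pi\theta)-1\le r-1-8r\theta^{2}$ and a Gaussian integral. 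Both routes give the same dependence $\bigl((k-H)^2/H^2+1/(k+1)\bigr)\cdot\er^{-H}H^k/k!$, and your $r=1.9$ case matches as well (the paper actually saves an extra harmless factor $1/\sqrt{1+H}$ there).

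One small point to tighten if you were to write this out fully: your claim that the $(1-\cos\theta)$ factor buys a further $\ll 1/(Hr)=1/k$ needs the uniform bound $I_0(x)-I_1(x)\ll I_0(x)/(1+x)$ for all $x\ge 0$, not just the large-$x$ asymptotics, and you should note explicitly that the leftover factor $2r$ is bounded ($r\le 1.9$), so it does not spoil the $1/(k+1)$. Neither observation is difficult, but they are where the ``routine'' in your saddle-point sentence is doing its work.
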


\begin{proof}
Write $H''=H''(T)$.
When $k=0$, $\PR(Z=0)=\er^{-H}$ and
\[
\PR(Y=0)=\PR(\forall p\in T: X_p=0)=\prod_{p\in T}\(1-\frac{1}{p}\) = \er^{-H}(1+O(H'')),
\]
and the desired inequality follows.

For $k\ge 1$,
we work with moment generating functions
as in the proof of
Hal{\'a}sz' theorem \eqref{halasz-local}; see also \cite[Ch. 21]{elliott}.
For any complex $z$, 
\[
\E z^{Z} = \er^{(z-1)H}.
\]
Uniformly for complex $z$ with $|z|\le 2$ we have
\be\label{EUT}
\E z^{U_T} = \prod_{p\in T} \(1+ \frac{z-1}p \) = \er^{(z-1)H(T)}
\Big(1 + O\big( |z-1|^2 H''(T) \big) \Big)
\ee
and uniformly for $|z| \le 1.9$ we have
\be\label{EWT}
\E z^{W_T} = \prod_{p\in T} \(1+ \frac{z-1}{p-z} \)=
 \er^{(z-1)H'(T)} \big(1 + O( |z-1|^2 H''(T) ) \big).
\ee
Write $e(\theta) = \er^{2\pi i \theta}$.
Then, for any $0 < r \le 1.9$, \eqref{EUT} and \eqref{EWT} imply
\bal
\PR(Y=k)-\PR(Z=k) &=\frac{1}{2\pi i} \oint\limits_{|z|=r} \frac{\E z^{Y} - \E z^{Z}}{z^{k+1}}\, dw \\
&= \frac{1}{r^{k}} \int_0^1 e(-k\theta)
\Big[ \E (re(\theta))^{Y} - \E (re(\theta))^{Z} \Big] \,d\theta \\
&= \frac{1}{r^k} \int_0^1  e(-k\theta) \er^{(re(\theta)-1)H} \cdot O \( |re(\theta)-1|^2 H'' \)\,  d\theta \\
&\ll \frac{H''}{r^k} \int_0^{1/2} |r e(\theta)-1|^2 \er^{(r\cos (2\pi \theta)-1) H}\, d\theta.
\eal
Now, for $0\le \theta \le \frac12$,
 $$r\cos (2\pi\theta)-1 = r-1-2r\sin^2(\pi \theta) \le r-1-8r\theta^2$$
and
\[
|re(\theta)-1|^2 = (r-1-2r\sin^2(\pi \theta))^2 + \sin^2(2\pi \theta) \ll (r-1)^2 + \theta^2,
\]
so we obtain
\be\label{UZ_first}
\begin{split}
\PR(Y=k)-\PR(Z=k) &\ll H'' \frac{\er^{(r-1)H}}{r^k} \int_0^{1/2} (|r-1|^2+\theta^2) \er^{-8r\theta^2 H}\, d\theta \\
 &\ll H'' \frac{\er^{(r-1)H}}{r^k}
\bigg( \frac{|r-1|^2}{\sqrt{1+rH}} + \frac{1}{(1+rH)^{3/2}}  \bigg).
\end{split}\ee

When $1\le k\le 1.9H$, we take $r = k/H$ in \eqref{UZ_first} and obtain, using Stirling's formula,
\bal
\PR(Y=k)-\PR(Z=k) &\ll H'' \frac{H^k \er^{k-H}}{k^k} \( \frac{|k/H-1|^2}{k^{1/2}}+ \frac{1}{k^{3/2}} \)\\
&\ll H'' \frac{\er^{-H} H^k}{k!} \( \bigg|\frac{k-H}{H}\bigg|^2 + \frac{1}{k} \).
\eal
When $k>1.9H$, take $r=1.9$ in \eqref{UZ_first}
and conclude that
\[
\PR(Y=k)-\PR(Z=k) \ll \frac{H'' \er^{0.9 H}}{(1.9)^k\sqrt{1+H}}.
\]
This completes the proof.
\end{proof}

\begin{cor}\label{dTV-cor}
Let $T$ be a finite  subset of the primes.  Then
\[
d_{TV}(U_T,\Pois(H(T))) \ll \frac{H''(T)}{1+H(T)}
\]
and
\[
d_{TV}(W_T,\Pois(H'(T))) \ll \frac{H''(T)}{1+H(T)},
\]
 \end{cor}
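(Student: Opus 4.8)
The plan is to read Corollary~\ref{dTV-cor} off from Theorem~\ref{kubilius_poisson} by summing the pointwise estimates, using the elementary identity $d_{TV}(Y,Z)=\frac12\sum_{k\ge0}\big|\PR(Y=k)-\PR(Z=k)\big|$ valid for any two random variables supported on $\NN_0$. First I would dispose of the trivial case $T=\emptyset$, where both total variation distances and $H''(T)$ vanish; thus I may assume $T\ne\emptyset$, so that $H(T)\ge\tfrac12$ and $H'(T)\ge H(T)\ge\tfrac12$. This lower bound is exactly what legitimizes passing from $1/H$ to $1/(1+H)$ in denominators, since $\tfrac1H\le\tfrac{3}{1+H}$ once $H\ge\tfrac12$.

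Next I would treat $Y=U_T$ with $H=H(T)$ and $Z\deq\Pois(H)$, splitting the sum over $k$ at the threshold $k=1.9H$ appearing in Theorem~\ref{kubilius_poisson}. On the range $0\le k\le 1.9H$ the first branch of that theorem bounds the contribution by a constant times
\[
H''(T)\sum_{k\ge0}\frac{H^k}{k!}\er^{-H}\(\frac{1}{k+1}+\pfrac{k-H}{H}^2\).
\]
The two sums here are standard: reindexing $k\mapsto k+1$ gives $\sum_{k\ge0}\frac{H^k}{k!}\er^{-H}\frac{1}{k+1}=\frac{1-\er^{-H}}{H}\le\frac1H$, and $\sum_{k\ge0}\frac{H^k}{k!}\er^{-H}\big(\frac{k-H}{H}\big)^2$ is $H^{-2}$ times the variance of $Z$, hence equals $\frac1H$. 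So this range contributes $\ll H''(T)/H\ll H''(T)/(1+H)$.

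For the tail $k>1.9H$ I would use the second branch of Theorem~\ref{kubilius_poisson} and sum a geometric series:
\[
\sum_{k>1.9H}H''(T)\pfrac{\er^{0.9H}}{(1.9)^k}\ll H''(T)\,\er^{0.9H}(1.9)^{-1.9H}=H''(T)\,\er^{-cH},\qquad c=1.9\log 1.9-0.9 .
\]
Since $c\approx0.32>0$ and $\er^{-cH}(1+H)$ is bounded on $[0,\infty)$, this is again $\ll H''(T)/(1+H)$, and adding the two ranges yields the first inequality of the corollary. The case $Y=W_T$ is then handled by the identical computation with $H$ replaced throughout by $H'(T)$ (using Theorem~\ref{kubilius_poisson} in the form $H=H'(T)$), which gives $d_{TV}(W_T,\Pois(H'(T)))\ll H''(T)/(1+H'(T))$; the stated bound follows from $H'(T)\ge H(T)$. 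The computation is routine; the one spot needing a moment's care is confirming that the geometric tail is genuinely exponentially small in $H$, i.e.\ that the exponent $c=1.9\log 1.9-0.9$ is positive, so that the $k>1.9H$ range can be absorbed into $H''(T)/(1+H)$. I anticipate no obstacle beyond this bookkeeping.
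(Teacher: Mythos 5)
Your overall strategy — bound $d_{TV}$ by $\tfrac12\sum_k|\PR(Y=k)-\PR(Z=k)|$, split at $k=1.9H$, and plug in Theorem~\ref{kubilius_poisson} — is exactly the paper's, but there is a genuine error in the small-$H$ regime. You assert that $T\ne\emptyset$ forces $H(T)\ge\tfrac12$; this is false, since $H(T)=\sum_{p\in T}1/p$ is arbitrarily small when $T$ consists of large primes (e.g.\ $T=\{p\}$ gives $H(T)=1/p$). The claim $H(T)\ge\tfrac12$ would hold if $2\in T$, but not in general. Consequently the step $\tfrac1H\le\tfrac{3}{1+H}$, and hence the passage from the moment bound $\ll H''(T)/H$ to the advertised $\ll H''(T)/(1+H(T))$, fails precisely when $H$ is small. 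This is not a cosmetic issue: for $T=\{p\}$ with $p$ large, your moment sums (the $\tfrac{1-\er^{-H}}{H}$ term and the normalized variance $\tfrac1H$) yield only $d_{TV}\ll H''/H = 1/p$, whereas the corollary asserts the much stronger $\ll H''/(1+H)\asymp 1/p^2$ — which is also the true order, as noted after Theorem~\ref{thm-main}.

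The paper sidesteps this by a case split on $H\le 2$ versus $H>2$. When $H\le 2$ the range $0\le k\le 1.9H$ contains only a bounded number of values, each with $|\PR(Y=k)-\PR(Z=k)|\ll H''$ by Theorem~\ref{kubilius_poisson} (the factors $\tfrac{H^k}{k!}\er^{-H}$, $\tfrac{1}{k+1}$ and $(\tfrac{k-H}{H})^2$ are then all $O(1)$ termwise once you restrict to $k\le 1.9H\le 3.8$), so the contribution is $\ll H''\ll H''/(1+H)$. Only for $H>2$ does one sum the full moment series to get $\ll H''/H\asymp H''/(1+H)$. Your geometric-tail estimate for $k>1.9H$ is fine, and your reduction of the $W_T$ case to the $U_T$ case via $H'(T)\ge H(T)$ is also correct. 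So the fix is to restore the paper's dichotomy on the size of $H$ rather than appealing to a lower bound on $H(T)$ that does not hold.
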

 
\begin{proof}
Let $Y \in \{U_T,W_T\}$.  If $Y=U_T$, let $H=H(T)$
and if $Y=W_T$, let $H=H'(T)$.  Let $Z\deq \Pois(H)$.
Again, write $H''=H''(T)$.
We begin with the identity
\[
d_{TV}(Y,Z) = \frac12 \sum_{k=0}^\infty \big|
\PR(Y_T=k) - \PR(Z(T)=k) \big|.
\]
Consider two cases.  First, if $H\le 2$, we have by Theorem \ref{kubilius_poisson},
\[
\sum_{k\ge 0} |\PR(Y=k)-\PR(Z=k)| \ll H'' + \sum_{k >  1.9H} H'' (1.9)^{-k} \ll H''.
\]
If $H>2$, Theorem \ref{kubilius_poisson} likewise implies that
\begin{align*}
\sum_{k > 1.9 H} |\PR(Y=k)-\PR(Z=k)| \ll H'' \sum_{k >  1.9H} \frac{\er^{0.9 H}}{(1.9)^k} 
\ll H'' \er^{-0.3 H}
\end{align*}
and also
 \bal
 \sum_{k \le 1.9H} |\PR(Y=k)-\PR(Z=k)| &\ll H'' \er^{-H} \sum_{k\le 1.9H} \frac{H^k}{k!}\Bigg[ \frac{1}{k+1} + \bigg|\frac{k-H_1}{H}\bigg|^2 \Bigg]\\
 &\ll \frac{H''}{H} \ll \frac{H''}{H(T)},
 \eal
 using that $\er^{-H} H^k/k!$ decays rapidly for
 $|k-H| > \sqrt{H}$.
\end{proof}

We now combine Theorem \ref{kubilius_poisson} with
the standard inequality
\be\label{dTV-vec}
d_{TV} ( (X_1,\ldots,X_m),(Y_1,\ldots,Y_m) ) \le \sum_{j=1}^m d_{TV} (X_j,Y_j),
\ee
valid if $X_1,\ldots,X_m$ are independent,
 and $Y_1,\ldots,Y_m$ are independent, with all variables
 living on the same set $\Omega$.

\begin{cor}\label{prime_poisson_intervals}
Let $T_1,\ldots,T_m$ be disjoint sets of primes.
For each $i$, either let $Y_i=U_{T_i}$ and $H_i=H(T_i)$
or let $Y_i=W_{T_i}$ and $H_i=H'(T_i)$.
For each $i$, let $Z_i\deq \Pois(H_i)$, and suppose
that $Z_1,\ldots,Z_m$ are independent.
  Then
\[
d_{TV} \big( (Y_1,\ldots,Y_m), (Z_1,\ldots,Z_m) \big) \ll \sum_{j=1}^m \frac{H''(T_j)}{1+H(T_j)}. 
\]
\end{cor}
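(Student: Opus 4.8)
The plan is to assemble Corollary \ref{prime_poisson_intervals} from the two pieces already in hand: the per-set total-variation bounds of Corollary \ref{dTV-cor}, and the subadditivity inequality \eqref{dTV-vec} for total variation distance of independent vectors. The only genuine input needed beyond these is that the components $Y_1,\ldots,Y_m$ are themselves mutually independent, so that \eqref{dTV-vec} applies.

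First I would verify the independence hypothesis of \eqref{dTV-vec}. Each $Y_i$ is a function solely of the variables $(X_p : p \in T_i)$: indeed $U_{T_i} = \#\{p \in T_i : X_p \ge 1\}$ and $W_{T_i} = \sum_{p\in T_i} X_p$ depend only on those coordinates. Since the $T_i$ are pairwise disjoint, the families $(X_p : p \in T_i)$ for $1\le i\le m$ involve disjoint collections of the mutually independent variables $X_p$; hence $Y_1,\ldots,Y_m$ are independent. The variables $Z_1,\ldots,Z_m$ are independent by hypothesis, and all variables live on $\NN_0$ (or on $\NN_0^m$), so \eqref{dTV-vec} is applicable.

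Second I would apply \eqref{dTV-vec} with $X_j = Y_j$ and $Y_j = Z_j$ to get
\[
d_{TV}\big((Y_1,\ldots,Y_m),(Z_1,\ldots,Z_m)\big) \le \sum_{j=1}^m d_{TV}(Y_j,Z_j).
\]
Then for each $j$, Corollary \ref{dTV-cor} applies: if $Y_j = U_{T_j}$ then $H_j = H(T_j)$, $Z_j \deq \Pois(H(T_j))$, and $d_{TV}(Y_j,Z_j) \ll H''(T_j)/(1+H(T_j))$; if $Y_j = W_{T_j}$ then $H_j = H'(T_j)$, $Z_j \deq \Pois(H'(T_j))$, and again $d_{TV}(Y_j,Z_j) \ll H''(T_j)/(1+H(T_j))$. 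Summing these bounds over $j$ gives exactly the asserted estimate.

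There is essentially no obstacle here — the statement is a packaging of Corollary \ref{dTV-cor} via the standard tensorization inequality \eqref{dTV-vec}. The one point worth stating carefully, rather than leaving implicit, is the disjointness-implies-independence observation of the first step, since \eqref{dTV-vec} is only valid when both vectors have independent coordinates; everything else is immediate.
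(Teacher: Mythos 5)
Your argument is correct and is exactly the paper's (implicit) proof: the corollary is stated immediately after \eqref{dTV-vec} precisely as the composition of that tensorization inequality with Corollary \ref{dTV-cor}, applied per coordinate. Your explicit verification that the $Y_i$ are independent (via disjointness of the $T_i$ and independence of the $X_p$) is a point the paper leaves tacit, and is worth spelling out as you did.
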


Combining Corollary \ref{prime_poisson_intervals}
with \eqref{Kubilius-omega} and the triangle inequality, we see that
\[
d_{TV} \Big( (f_1,\ldots,f_m), (Z_1,\ldots,Z_m) \Big) \ll  \sum_{j=1}^m \frac{H''(T_j)}{1+H(T_j)} + u^{-u} + x^{-0.99}.
\]
We may remove the term $x^{-0.99}$, because if
$y\le x^{1/3}$ then $H''(T_i) \gg x^{-2/3}$
and $H(T_i) \ll \log\log x$, while
 if $y > x^{1/3}$ then $u^{-u} \gg 1$.
This completes the proof of Theorem \ref{thm-main}.

%
%
\section{A uniform upper bound}\label{sec:primes_sets}
%
%

In this section we prove Theorem \ref{prime_factors_sets}
 and Theorem \ref{conditional}.

\begin{proof}[Proof of Theorem \ref{prime_factors_sets}]
Let 
\[
N = \# \{n\le x : \omega(n;T_j) = k_j\; (1\le j\le r) \} .
\]
If $\eta=0$ (that is, $T_1 \cup \cdots \cup T_r$ contains all the primes $\le x$)
and $k_1=\cdots=k_r=0$, then $N=1$; this explains the need for the additive term $\xi$
in Theorem \ref{prime_factors_sets}.  

Now assume that either $\eta=1$ or that $k_i\ge 1$ for some $i$.
Let
\[
L_t(x) = \ssum{h\le x \\ \omega(h;T_j)=k_j-\one_{j=t} \; (1\le j\le r)} \frac{1}{h} \qquad (0\le t\le r),
\]
where $\one_A$ is the indicator function of the condition $A$.
We use the ``Wirsing trick'', starting with
$\log x \ll \log n = \sum_{p^a\| n}\log p^a$ for $x^{1/3}\le n\le x$ and thus
\[
(\log x) N  \ll \ssum{n\le x^{1/3} \\ \omega(n;T_j)=k_j \; (1\le j\le r)} \log x + \ssum{n\le x \\ \omega(n;T_j) = k_j\; (1\le j\le r)} \sum_{p^a \| n} \log p^a.
\]
In the first sum, $\log x \le \frac{x^{1/3} \log x}{n} \ll \frac{x^{1/2}}{n}$, hence the sum is 
at most $\le x^{1/2} L_0(x)$.  In the double sum, let $n=p^ah$ and observe that $\omega(h,T_j)=k_j-1$ if $p\in T_j$ and 
 $\omega(h,T_j)=k_j$ otherwise.   
In particular, if $p \not \in T_1\cup \cdots \cup T_r$ then $\omega(h,T_j)=k_j$
for all $j$, and this is only possible if $\eta=1$.
 Hence
 \[
 (\log x) N 
\ll x^{1/2} L_0(x) + \sum_{t=1-\eta}^r \ssum{h\le x \\ \omega(h;T_j)=k_j-\one_{j=t} \; (1\le j\le r)} \sum_{p^a\le x/h} \log p^a.
\]
Using Chebyshev's Estimate for primes, the innermost sum over $p^a$
is $O(x/h)$ and thus the double sum over $h,p^a$ is $O(L_t(x))$.
Also, if $k_j=0$ then there is the sum corresponding to $t=j$ is empty.
 This gives
\be\label{primes-sets-1}
 \PR_x \Big(\omega(n;T_j) = k_j\; (1\le j\le r) \Big) \ll \frac{1}{\log x} \bigg(  (\eta + x^{-1/2}) L_0(x) + \sum_{1\le t\le r: k_t>0} L_t(x) \bigg). 
\ee
Now we fix $t$ and bound the sum $L_t(x)$; if $t\ge 1$ we may assume that $k_t\ge 1$.  Write the denominator $h=h_1\cdots h_r h'$, 
where, for $1\le j\le r$, $h_j$ is composed only of primes from $T_j$,
\[
\omega(h_j;T_j)=m_j := k_j-\one_{t=j},
\]
 and $h'$ is composed of primes below $x$ which lie in none 
of the sets $T_1, \cdots, T_r$.  For $1\le j\le r$ 
we have
\[
\sum_{h_j} \frac{1}{h_j} \le \frac{1}{m_j!} \bigg(
\sum_{p\in T_j} \frac{1}{p}+\frac{1}{p^2}+\cdots \bigg)^{m_j}
= \frac{H'(T_j)^{m_j}}{m_j!},
\]
and, using Mertens' estimate,
\[
\sum_{h'} \frac{1}{h'} \le \sprod{p\le x \\ p\not\in T_1\cup\cdots \cup T_r} \(1 - \frac{1}{p} \)^{-1} 
\ll (\log x) \prod_{p\in  T_1\cup\cdots \cup T_r} \(1 - \frac{1}{p} \).
\]
Thus,
\[
L_t(x) \ll (\log x) \prod_{j=1}^r \frac{H'(T_j)^{m_j}}{m_j!}  \prod_{p\in  T_1\cup\cdots \cup T_r} \(1 - \frac{1}{p} \).
\] 
Using the elementary inequality $1+y\le \er^y$, we see that
the final product over $p$ is  at most
$\er^{-H(T_1)-\cdots-H(T_r)}$, and we find that
\be\label{primes-sets-L}
L_t(x) \ll (\log x) \prod_{j=1}^r \Bigg( \frac{H'(T_j)^{m_j}}{m_j!} \er^{-H(T_j)} \Bigg)
\ee
Combining estimates \eqref{primes-sets-1} and \eqref{primes-sets-L},
we conclude that
\[
\PR_x \Big( \omega(n;T_j) = k_j \; (1\le j\le r) \Big)  \ll  
\Bigg(\eta+ x^{-1/2} + \sum_{j=1}^r \frac{k_j}{H'(T_j)} \Bigg)
\prod_{j=1}^r \Bigg( \frac{H'(T_j)^{k_j}}{k_j!} \er^{-H(T_j)} \Bigg).
\]
Either $\eta=1$ or $k_j/H'(T_j) \gg 1/\log\log x$ for some $j$, and hence the additive term $x^{-1/2}$
may be omitted.
This proves the first claim.

Next,
\[
\prod_{j=1}^r \frac{H'(T_j)^{k_j}}{k_j!} 
\Bigg(1 + \sum_{j=1}^r \frac{k_j}{H'(T_j)} \Bigg) 
\le \prod_{j=1}^r \frac{(H'(T_j)+1)^{k_j}}{k_j!} 
\]
and we have $H'(T) \le H(T)+\sum_p \frac{1}{p(p-1)}
\le H(T)+1$.  This proves the final inequality.
\end{proof}

To prove Theorem \ref{conditional}
we need standard tail bounds for
the binomial distribution.  For proofs, see  \cite[Lemma 4.7.2]{Ash} or
 \cite[Th. 6.1]{DT}.

\begin{lem}[Binomial tails]\label{binomial_tails}
Let $X$ have binomial distribution according to $k$ trials and parameter $\a\in [0,1]$;
that is, $\PR(X=m)=\binom{k}{m} \a^m (1-\a)^{k-m}$.
If $\beta\le \a$ then we have
\[
\PR(X\le \beta k) \le \exp \left\{ - k \( \b \log \frac{\b}{\a} + (1-\b) \log \frac{1-\b}{1-\a} \) \right\}
\le \exp \bigg\{-\frac{(\a-\beta)^2 k}{3\a(1-\a)} \bigg\}.
\]
Replacing $\a$ with $1-\a$ we also have for $\beta\ge \a$,
\[
\PR(X\ge \beta k) \le  \exp \bigg\{-\frac{(\a-\beta)^2 k}{3\a(1-\a)} \bigg\}.
\]
\end{lem}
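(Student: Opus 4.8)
The plan is to obtain the first (sharp) inequality from the exponential--moment (Chernoff) method, and then to deduce the Gaussian--type bound from it by an elementary estimate for the binary relative entropy; the upper--tail statement will follow by a symmetry. Throughout set
\[
D(\b\|\a) := \b\log\tfrac{\b}{\a} + (1-\b)\log\tfrac{1-\b}{1-\a},
\]
with the convention $0\log 0 = 0$, so that the first displayed inequality of the statement reads $\PR(X \le \b k) \le \er^{-kD(\b\|\a)}$.

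For the lower tail I would argue as follows. One may assume $0<\b<\a<1$: the case $\b=\a$ is vacuous, the cases $\a\in\{0,1\}$ are trivial, and if $\b=0$ then $\PR(X\le 0)=(1-\a)^k=\er^{-kD(0\|\a)}$ directly. Write $X=X_1+\cdots+X_k$ with $X_1,\dots,X_k$ independent and $\PR(X_i=1)=\a$. For every real $s\ge 0$, Markov's inequality applied to $\er^{-sX}$ and independence of the $X_i$ give
\[
\PR(X\le\b k)\le \er^{s\b k}\,\E\,\er^{-sX}=\Big(\er^{s\b}\big(1-\a+\a\er^{-s}\big)\Big)^{k}.
\]
The function $\psi(s):=s\b+\log\big(1-\a+\a\er^{-s}\big)$ is convex, and its minimum over $s\ge 0$ is attained at $s^\star=\log\frac{\a(1-\b)}{\b(1-\a)}$, which is nonnegative precisely because $\b\le\a$; substituting (the inner factor collapses to $\frac{1-\a}{1-\b}$) yields $\psi(s^\star)=-D(\b\|\a)$, and the first inequality follows. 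For the upper tail, note that $k-X$ is binomial with $k$ trials and parameter $1-\a$, so for $\b\ge\a$,
\[
\PR(X\ge\b k)=\PR\big(k-X\le(1-\b)k\big)\le \er^{-kD(1-\b\|1-\a)}=\er^{-kD(\b\|\a)},
\]
using the symmetry $D(1-\b\|1-\a)=D(\b\|\a)$.

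It then remains to verify the elementary inequality $D(\b\|\a)\ge\frac{(\a-\b)^2}{3\a(1-\a)}$. Here I would apply Taylor's theorem with Lagrange remainder to $g(t):=D(t\|\a)$ in its first argument: since $g(\a)=0$, $g'(\a)=0$ and $g''(t)=\frac{1}{t(1-t)}$, there is a point $\xi$ strictly between $\b$ and $\a$ with $D(\b\|\a)=\frac{(\a-\b)^2}{2\,\xi(1-\xi)}$. As $t\mapsto t(1-t)$ is monotone on each of $[0,\tfrac12]$ and $[\tfrac12,1]$ and is bounded by $\tfrac14$, a short case analysis bounds $\xi(1-\xi)$ suitably (for instance, when $\a\le\tfrac12$ one simply has $\xi(1-\xi)\le\a(1-\a)$, which yields even the constant $2$ in place of $3$); the remaining details of this routine estimate are in \cite[Lemma 4.7.2]{Ash} and \cite[Th.\ 6.1]{DT}. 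The only point requiring any care is this last relative--entropy bound; the Chernoff step and its optimization are entirely standard.
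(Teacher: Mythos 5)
Your Chernoff step is correct and complete: the exponential-moment bound, the optimization at $s^\star=\log\frac{\a(1-\b)}{\b(1-\a)}$, and the symmetry reduction $X\mapsto k-X$ for the upper tail are all fine, and this is the standard route (the paper itself offers no proof of this lemma -- it only cites \cite{Ash} and \cite{DT}, and the first displayed inequality is essentially Ash's Lemma 4.7.2). So up to $\PR(X\le\b k)\le\er^{-kD(\b\|\a)}$ there is nothing to object to.

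The genuine gap is precisely the step you dismiss as a routine case analysis: the comparison $D(\b\|\a)\ge\frac{(\a-\b)^2}{3\a(1-\a)}$. In the generality in which the lemma is stated (all $0\le\b\le\a\le 1$) this inequality is \emph{false}, so no analysis of $\xi(1-\xi)$ can establish it. For example, with $\a=0.9$, $\b=0.1$ one has $D(\b\|\a)=0.8\log 9\approx 1.76$, while $\frac{(\a-\b)^2}{3\a(1-\a)}=\frac{0.64}{0.27}\approx 2.37$. Nor is the defect confined to the intermediate inequality: since $\PR(X\le\b k)\ge\PR(X=\lfloor\b k\rfloor)=\er^{-kD(\b\|\a)+O(\log k)}$, the final Gaussian-type bound itself fails for such parameters once $k$ is large, so the statement can only be proved after restricting the range. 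Your own observation already covers every lower-tail case with $\a\le\frac12$ (even with the constant $2$); what is missing, and what cannot be waved away, is the complementary regime ($\a>\frac12$ for the lower tail, equivalently $\b$ much larger than $\a$ for the upper tail), where one must impose something like $\b\le 2\a$. This restriction is harmless for the paper, since in the proof of Theorem \ref{conditional} one has $\a\le\frac12$ and $|\b-\a|\le\psi\sqrt{\a(1-\a)/k}\le\a\sqrt{1-\a}$, hence $\b\le 2\a$; but your proof should state and use it rather than defer to the references. A secondary point: even in the restricted upper-tail range $\a\le\b\le 2\a$, the Lagrange-remainder bound as you set it up only gives $\xi(1-\xi)\le\b(1-\a)\le 2\a(1-\a)$, i.e.\ the constant $4$ rather than $3$; recovering the constant $3$ (which Theorem \ref{conditional} quotes as $\er^{-\psi^2/3}$) requires a sharper estimate of the remainder on this range, not just the crude endpoint bound.
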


\begin{proof}[Proof of Theorem \ref{conditional}]
We may assume that $\alpha k \ge C$, where 
$C$ is a sufficiently large constant, depending on $A$.
Without loss of generality, we may assume that $H(T)\le \frac12 H(S)$ (that is , $\alpha\le \frac12$), else replace $T$ by $S \setminus T$.
Apply
Theorem \ref{prime_factors_sets} with two sets: $T_1=T$
and $T_2=S\setminus T$, so that $\eta=\xi=0$.
We need the lower bound
\[
\PR_x (\omega(n)=k) \gg_A \frac{(\log\log x)^{k-1}}{(k-1)!\log x}
= \frac{k}{\log\log x} \,\cdot \,  \frac{(\log\log x)^{k}}{k!\log x}
\]
see, e.g. Theorem 6.4 in Chapter II.6 of \cite{Tenbook}.
Also,
\[
\Bigg( \frac{k-h}{H'(S\setminus T)} + \frac{h}{H'(T)} \Bigg) \frac{\log\log x}{k}
\ll 1 + \frac{h}{\alpha k}.
\]
Since $H'(S\setminus T) \le H(S\setminus T)+1$, we have
\[
H'(S\setminus T)^{k-h} \ll H(S\setminus T)^{k-h}.
\]
In addition, 
\[
H'(T)^h \le (H(T)+1)^h \le  H(T)^h \er^{h/H(T)} \le H(T)^h \er^{O_A(h/(\alpha k))}.
\]
 Then, for $0\le h\le k$, Theorem \ref{prime_factors_sets} implies
 \[
 \PR\Big( \omega(n,T)=h \big| \omega(n)=k \Big)  \ll_A
 \a^{h} (1-\a)^{k-h} \binom{k}{h} \er^{O_A(h/(\alpha k))}.
 \]
Ignoring the factor $(1-\a)^{k-h}$, we see that
the terms with $h\ge 100\alpha k$ contribute at most
\begin{align*}
\sum_{h\ge 100\a k} \frac{(\a k \er^{O_A(1/(\a k))})^h}{h!}
\le \sum_{h\ge 100\a k} \frac{(2\a k)^h}{h!}  \le \er^{-100 \a k}
\le \er^{-100 \psi^2}
\end{align*}
for large enough $C$.
When $h<100\a k$ we have
\[
\PR\Big( \omega(n,T)=h \big| \omega(n)=k \Big)  \ll_A
 \a^{h} (1-\a)^{k-h} \binom{k}{h},
\]
and the theorem now follows from Lemma \ref{binomial_tails},
taking
 $\beta  = \a \pm \psi\sqrt{\a(1-\a)/k}$.
\end{proof}

%
%

\end{document}